\documentclass[11pt]{article}
\usepackage[a4paper,margin=2.7cm]{geometry}
\usepackage{amsmath,amssymb,amsthm,mathtools}
\usepackage{graphicx,booktabs,microtype,xcolor}
\usepackage{siunitx}
\usepackage{enumitem,hyperref,cleveref}
\usepackage{tikz}
\hypersetup{colorlinks=true,linkcolor=blue!45!black,citecolor=blue!45!black,urlcolor=blue!45!black}
\graphicspath{{figures/}}
\newtheorem{theorem}{Theorem}[section]
\newtheorem{proposition}[theorem]{Proposition}

\newcommand{\R}{\mathbb{R}}
\newcommand{\N}{\mathcal{N}}
\newcommand{\V}{\mathcal{V}}
\newcommand{\I}{\mathcal{I}}

\title{\Large Mapping for Approximation: A Unified View of Rescaled, Variably Scaled and Rational Kernel Methods \thanks{This work has been done within the framework of the Internationalisation Program with the Selcuk University}}
\author{Stefano De Marchi \\Department of Medicine \\University of Padova, Italy \\ and \\Department of Mathematics \\ Selcuk University\\ Konya, Turkey}\date{\today}
\begin{document}
\maketitle
\begin{abstract}
Classical approximation methods are usually improved by changing the sampling set, increasing the number of data points, or selecting a different basis. We advocate a complementary viewpoint: keep the sampled values fixed and modify the representation through a suitable mapping. This viewpoint unifies several constructions that were originally introduced for different purposes. Rescaled radial basis function interpolation maps the interpolation operator through a normalization that enforces exact reproduction of constants. Variably scaled kernels map the geometry by lifting the data to a higher-dimensional manifold determined by a scale function. Mapped bases and fake nodes map the approximation space without resampling the data, while rational kernel expansions can be interpreted as nonlinear mappings of the trial space. We develop a common notation for these mechanisms, summarize their approximation and stability properties, and explain how discontinuous and rational variants fit the same framework. Reproducible numerical experiments illustrate constant reproduction, error reduction through data-mimicking scale functions, and the suppression of the Runge phenomenon by mapped polynomial bases. The results support a general principle: mappings provide a flexible way to adapt approximation spaces to data geometry and regularity without modifying the original observations.
\end{abstract}

\section{Introduction}
Let $\Omega\subset\R^d$, let $X=\{x_1,\ldots,x_N\}\subset\Omega$ be pairwise distinct, and let $f_X=(f(x_1),\ldots,f(x_N))^T$. A classical approximation scheme combines the sampling set, a finite-dimensional trial space, and an interpolation or approximation operator. In compact notation we write
\[
 \mathcal A=(X,\V_X,\I_X),
\]
where $\V_X$ is generated by kernels, polynomials, or other basis functions. Most classical strategies modify $X$ or enlarge $\V_X$. The central idea of this paper is different: the data remain untouched, while a mapping changes the interpolant, the geometry, or the trial space.

The viewpoint originates from three families of methods developed in the recent approximation literature: the rescaled RBF interpolant~\cite{Deparis2014,DeMarchi2017,DeMarchiWendland2020}, variably scaled kernels~\cite{Bozzini2015,VSDK2020,ShapeDriven2020}, and mapped bases without resampling~\cite{FakeNodes2020,MultivariateFake2021,GRASPA2021}. The first normalizes a standard interpolant by the interpolant of the constant function. The second lifts every point $x$ to $(x,\psi(x))$ and applies a fixed kernel in the augmented space. The third composes every basis function with a map $S$, thereby creating fake nodes used only in the reconstruction. Rational RBF expansions form a fourth, nonlinear realization in which the approximant is a quotient of two kernel expansions~\cite{Buhmann2020}.

\begin{figure}[t]
\centering
\includegraphics[width=.88\linewidth]{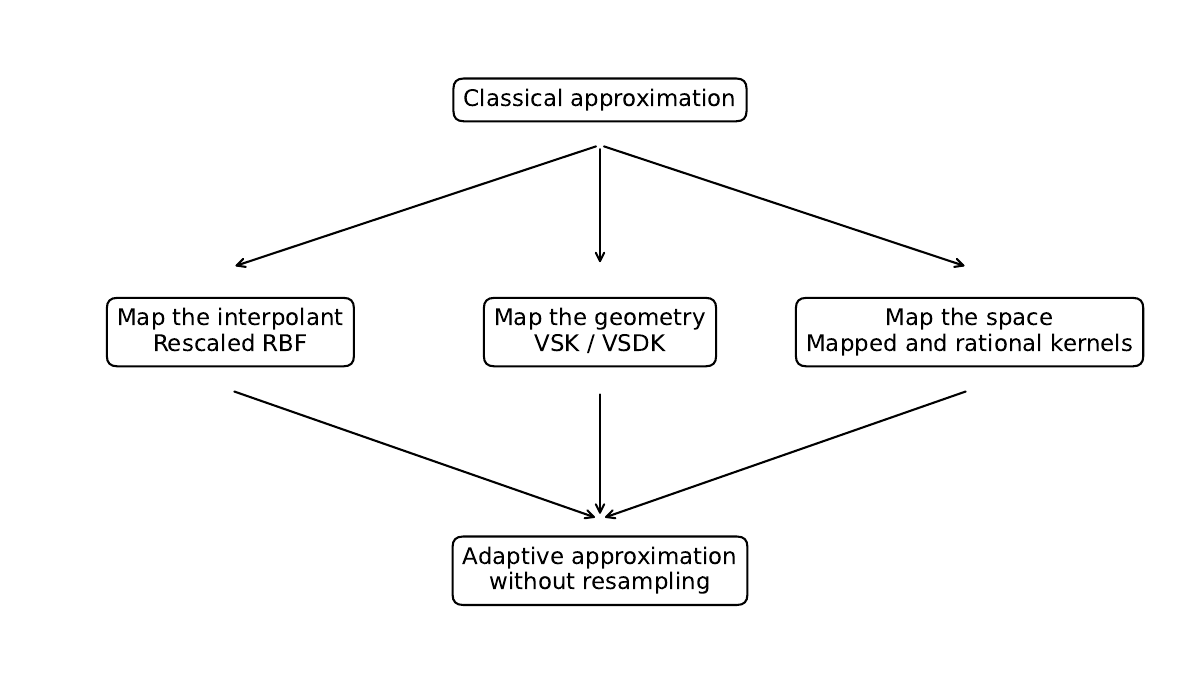}
\caption{The unifying mapping philosophy. The sampled values are preserved, while the interpolant, geometry, or approximation space is transformed.}
\label{fig:philosophy}
\end{figure}

The paper is organized as follows. Section~\ref{sec:classical} recalls kernel interpolation and the stability-accuracy trade-off. Section \ref{sec:mapping} and its subsections describe different types of mapping strategies.  Section~\ref{sec:rescaled} describes the rescaled method and its Shepard interpretation. Section~\ref{sec:vsk} treats variably scaled and discontinuous kernels. Section~\ref{sec:mapped} discusses mapped bases and fake nodes. Section~\ref{sec:rational} incorporates rational kernels into the same framework. Section~\ref{sec:unified} formulates a general mapping principle, and Section~\ref{sec:num} reports numerical experiments.

\section{Classical kernel approximation}\label{sec:classical}
In what follows, we assume that $K:\Omega\times\Omega\to\R$ is a strictly positive definite kernel. The kernel interpolation problem can be written in the form
\begin{eqnarray}\label{eq:rbf}
 P_f(x)=\sum_{j=1}^N c_jK(x,x_j),\\
  Ac=f_X, \quad A_{ij}=K(x_i,x_j).
\end{eqnarray}
For a radial kernel, $K(x,y)=\phi(\varepsilon\|x-y\|_2)$ (with $\|\cdot\|2$ the Euclidean norm), the parameter $\varepsilon>0$ controls flatness or support size of the {\it radial basis function (RBF)} $\phi$. Typical examples of RBF include Gaussian $\phi(r)=e^{-r^2}$, Mat\'ern kernels, and compactly supported Wendland functions (example, the $C^2$ Wendland function is $\phi(r)=(1-r)^4_+(4r+1)$). See, for example ~\cite{Fasshauer2007, Wendland2005, DeMP:book26},for a complete list of kernels, their contructions and properties.

The {\it separation distance} and {\it fill-distance} are two important quantities
\[
 q_X=\frac12\min_{i\ne j}\|x_i-x_j\|_2,
 \qquad h_{X,\Omega}=\sup_{x\in\Omega}\min_{x_j\in X}\|x-x_j\|_2.
\]
which play a key role for understanding stability, convergence of the interpolation, or providing error estimates and bounds for the interpolation with RBFs. 
A fundamental quantity for error estimation is the
\emph{power function}
\begin{equation}
	P_{K,X}\left(\boldsymbol{x}\right) := \sqrt{\Phi\left(\boldsymbol{x}, \boldsymbol{x}\right) -\left({b}\left(\boldsymbol{x}\right)\right)^{\intercal} \mathsf{A}^{-1} {b}\left(\boldsymbol{x}\right)},
	\label{eq:pf1}
\end{equation}
where ${b} = (K(\cdot,{x}_1), \ldots, K(\cdot,{x}_N))^{\intercal}$ and $\mathsf{A}$ is the kernel matrix defined above. Alternatively, the power function can be expressed using the cardinal functions $u_k$,
\begin{equation}
P_{K,X}(x) =\sqrt{K(x,x) - \sum_{k=1}^n K(x,x_k)\, u_k(x)} \label{eq:pf2}
\end{equation}
If a function $f$ belongs to the so-called {\it native space} $\N_K(\Omega)$, which is the Hilbert space of functions for which the kernel $K$ acts as a reproducing kernel, 
$P_{K,X}$ yields the classical pointwise estimate~\cite{Fasshauer2007}
\[
 |f(x)-P_f(x)|\le P_{K,X}(x)\,\|f\|_{\N_K(\Omega)}, \;\; x \in \Omega.
\]
Small interpolation errors often occur in flat regimes in which, on the contrary, $A$ becomes severely ill-conditioned, a manifestation of the accuracy--stability trade-off principle discussed in~\cite{Schaback1995}. Hence, shape-parameter selection reflects a trade-off between accuracy and stability. The methods we are discussing below, can be interpreted as ways of changing this trade-off by "mapping the representation rather than the observations".

\section{Mapping the interpolation operator} \label{sec:mapping}
\subsection{The rescaled method}\label{sec:rescaled}
Let $P_f$ be the kernel interpolant of $f$, and let $P_1$ denote the interpolant of the constant function $1$. Following the {\it rescaled localized RBF (RL-RBF)} construction introduced in~\cite{Deparis2014} and further analyzed in~\cite{DeMarchi2017, DeMarchiWendland2020}, the rescaled interpolant is
\begin{equation}\label{eq:rescaled}
 \widehat P_f(x)=\frac{P_f(x)}{P_1(x)},
 \qquad P_1(x)\ne0.
\end{equation}
For its construction, it uses the same nodes and the same kernel matrix as the classical kernel interpolation method, but maps the interpolation operator through a nonlinear normalization. The main properties of the rescaled interpolant can be summarized in the following Proposition.

\begin{proposition}[Interpolation and constant reproduction]
For every data vector $f_X$, $\widehat P_f(x_i)=f(x_i)$. Moreover, if $f\equiv c$, then $\widehat P_f\equiv c$ wherever $P_1$ does not vanish.
\end{proposition}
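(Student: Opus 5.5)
The plan is to argue directly from the interpolation conditions $Ac=f_X$ in \eqref{eq:rbf} and from linearity of the map $f_X\mapsto P_f$. No estimate is required, only the structure of the kernel system.

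\emph{Interpolation.} Since $K$ is strictly positive definite, the kernel matrix $A$ is invertible. Hence $P_f$ is uniquely determined and satisfies $P_f(x_i)=f(x_i)$ for $i=1,\ldots,N$. Applying the same fact to the constant data vector $\mathbf 1=(1,\ldots,1)^T$ gives $P_1(x_i)=1$ for every $i$. In particular $P_1$ does not vanish at any node, so $\widehat P_f(x_i)$ is well defined by \eqref{eq:rescaled}, and
\[
\widehat P_f(x_i)=\frac{P_f(x_i)}{P_1(x_i)}=\frac{f(x_i)}{1}=f(x_i).
\]

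\emph{Constant reproduction.} Write $P_f(x)=b(x)^T A^{-1} f_X$, with $b(x)$ the vector of kernel translates used in \eqref{eq:pf1}. This representation shows that $f_X\mapsto P_f$ is linear. If $f\equiv c$, then $f_X=c\,\mathbf 1$, and so $P_f=c\,P_1$ identically on $\Omega$. Therefore $\widehat P_f(x)=cP_1(x)/P_1(x)=c$ at every $x$ where $P_1(x)\neq0$. The case $c=0$ is included, because then $P_f\equiv0$.

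\emph{Main point to check.} The only delicate step is well-definedness. Away from the nodes, $P_1$ may vanish, since $P_1$ need not be positive for kernels that are not localized. This is exactly why the statement restricts the conclusion to the set where $P_1\neq0$. At the nodes themselves, the identity $P_1(x_i)=1$ removes the issue. No further assumption on $K$ beyond strict positive definiteness, which gives invertibility of $A$, is needed.
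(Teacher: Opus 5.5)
Your proof is correct and takes essentially the same approach as the paper: $P_1(x_i)=1$ gives interpolation at the nodes, and $P_f=cP_1$ for constant data gives reproduction of constants. The paper obtains $P_f=cP_1$ from uniqueness of the interpolant, while you obtain it from linearity of $f_X\mapsto b(x)^TA^{-1}f_X$; both rest on the invertibility of $A$.
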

\begin{proof}
Since $P_f(x_i)=f(x_i)$ and $P_1(x_i)=1$, interpolation follows immediately. For constant data $f(x_i)=c$, uniqueness gives $P_f=cP_1$, hence $\widehat P_f=c$.
\end{proof}

Let $u_j$ be the cardinal functions, so that $P_f=\sum_jf(x_j)u_j$ and $P_1=\sum_ju_j$. Then
\begin{equation}\label{eq:shepard}
 \widehat P_f(x)=\sum_{j=1}^N f(x_j)\widehat u_j(x),
 \qquad \text{with} \qquad \widehat u_j(x)=\frac{u_j(x)}{\sum_{k=1}^Nu_k(x)}, \; \; \; \forall \, x \in \Omega.
\end{equation}
Consequently $\sum_j\widehat u_j=1$, and the method is indeed a {\it Shepard-type approximation} whose weights are the normalized cardinal functions of the kernel. This interpretation is central in the analysis of the rescaled localized RBF method as discussed in ~\cite{DeMarchi2017,DeMarchiWendland2020}.

For stability analysis, the associated Lebesgue function and constant are then defined as
\[
 \widehat\Lambda_N(x):=\sum_{j=1}^N|\widehat u_j(x)|,
 \qquad \widehat\lambda_N:=\|\widehat\Lambda_N\|_{L^\infty(\Omega)},
\]
and
\[
 \|\widehat P_f\|_{L^\infty(\Omega)}
 \le \widehat\lambda_N\|f\|_{\ell^\infty(X)}.
\]
Normalization may reduce the value assumed by the Lebesgue function, especially for compactly supported and localized bases, but the denominator must be monitored. In local or partition-of-unity implementations, the method can be applied patchwise, which avoids global denominator failures and preserves sparsity, for details see \cite{DeMarchi2017,Buhmann2020}.

A useful reinterpretation of this approach is obtained by re-writing it 
\[
 \widehat P_f(x)=\sum_{j=1}^N c_j\widehat K_j(x),
 \qquad
 \widehat K_j(x)=\frac{K(x,x_j)}{P_1(x)P_1(x_j)}.
\]
Hence, rescaling the kernel, can itself be viewed as a {\it mapping of the basis} through $\widehat{K}$.

\subsection{Mapping the geometry: variably scaled kernels}\label{sec:vsk}
Let $\psi:\Omega\subset \R^d \to\R$ be a scale function, we define the {\it lifting map}
\[
 \sigma(x)=(x,\psi(x))\in\R^{d+1}.
\]
Given a positive definite kernel $K$ on $\R^{d+1}$, the {\it variably scaled kernel (VSK)} introduced in~\cite{Bozzini2015} is
\begin{equation}\label{eq:vsk}
 K_\psi(x,y)=K(\sigma(x),\sigma(y)).
\end{equation}
The positive definiteness of $K_\psi$ is inherited by restriction to the lifted manifold $\sigma(\Omega)$. 
The following Proposition of \cite{Bozzini2015} states some additional properties of VSKs.
\begin{proposition}\label{prp:propvsk}
	Let $K$, $K_{\psi}$, and $\psi$ be as before. We have 
	\begin{enumerate}
		\item[(i)]
		If $K$ and $\psi$ are continuous, so is $K_{\psi}$. Moreover, if $\psi: \Omega \rightarrow \psi(\Omega)$ is a bijection, then $K_\psi$ inherits the positiveness properties of $K$.
		\item[(ii)]
		For the separation distances we have
		{ {\begin{equation*}
			q_X \le q_{\sigma(X)}\le (1+L)q_X
		\end{equation*}
		for any choice of the scaling map $\psi$. Indeed, by the definition of the Euclidean norm, we have immediately
        $$\|\sigma({x}_i)-\sigma({x}_j)\|^2 \ge \|{x}_i - {x}_j\|^2$$
        and also
        $$\|\sigma({x}_i)-\sigma({x}_j)\|^2=\|{x}_i-{x}_j\|^2+(\psi({x}_i)-\psi({x}_j))^2 \le \|{x}_i-{x}_j\|^2(1+L)^2,$$
        where $L$ is the Lipschitz constant of $\psi$.
       }}
		\item[(iii)]
		Let $G_{\sigma}(\Omega) = \{({x},\psi({x}))\:|\:{x} \in \Omega\} \subset \Omega \times \R$ be the graph of the scaling function $\psi$. Then, the native spaces $\mathcal{N}_{K}(G_{\sigma}(\Omega))$ and $\mathcal{N}_{K_{\sigma}}(\Omega)$ are isometrically isomorphic (cf. [Th. 2, Bozzini15]).
	\end{enumerate}
\end{proposition}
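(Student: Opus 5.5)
The plan is to treat the three items separately, since each reduces to an elementary property of the lifting map $\sigma(x)=(x,\psi(x))$ together with the definition \eqref{eq:vsk}.

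\textbf{Item (i).} Continuity follows by composition. If $\psi$ is continuous, then $\sigma$ is continuous as a map into $\R^{d+1}$, so $(x,y)\mapsto(\sigma(x),\sigma(y))$ is continuous and $K_\psi=K\circ(\sigma\times\sigma)$ is continuous.

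For positive definiteness, the essential observation is that $\sigma$ is always injective, because its first $d$ coordinates are $x$ itself; the bijectivity hypothesis on $\psi$ is therefore stronger than needed. I will take pairwise distinct $x_1,\ldots,x_N$ and a nonzero $c\in\R^N$. The lifted points $\sigma(x_1),\ldots,\sigma(x_N)$ are pairwise distinct in $\R^{d+1}$. Hence
\[
\sum_{i,j}c_ic_jK_\psi(x_i,x_j)=\sum_{i,j}c_ic_jK(\sigma(x_i),\sigma(x_j))>0
\]
by strict positive definiteness of $K$ on $\R^{d+1}$. The same argument transfers conditional positive definiteness, if one also checks that the polynomial side conditions are preserved. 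I will only mention this.

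\textbf{Item (ii).} I will prove the two inequalities pairwise and then take minima over $i\neq j$, keeping the factor $\tfrac12$ in the definition of $q_X$ on both sides. The lower bound is immediate from the identity
\[
\|\sigma(x_i)-\sigma(x_j)\|^2=\|x_i-x_j\|^2+(\psi(x_i)-\psi(x_j))^2 .
\]
For the upper bound, the Lipschitz property gives $(\psi(x_i)-\psi(x_j))^2\le L^2\|x_i-x_j\|^2$. This yields the factor $\sqrt{1+L^2}$, and $\sqrt{1+L^2}\le 1+L$. The only care needed is that the index pair attaining $q_X$ need not attain $q_{\sigma(X)}$. I handle this by bounding $q_{\sigma(X)}$ above by its value on the minimizing pair for $X$, and below by the pairwise lower bound applied to the minimizing pair for $\sigma(X)$.

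\textbf{Item (iii).} This is the main obstacle, because it needs the native-space machinery rather than algebra. The plan is to define $T:\N_K(G_\sigma(\Omega))\to\N_{K_\psi}(\Omega)$ by $Tg=g\circ\sigma$, using that $\sigma:\Omega\to G_\sigma(\Omega)$ is a bijection with inverse given by projection onto the first $d$ coordinates.

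First, on the pre-Hilbert spaces spanned by kernel translates, $T$ maps $\sum_j a_jK(\cdot,\sigma(x_j))$ to $\sum_j a_jK_\psi(\cdot,x_j)$. The Gram matrices coincide entrywise, so $T$ preserves norms there, and it is onto the span of the translates $K_\psi(\cdot,x_j)$.

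Second, I extend $T$ by density and completeness to the closures, which are the native spaces. Here I must check that the extension still acts as composition with $\sigma$. This follows because norm convergence in a reproducing kernel Hilbert space implies pointwise convergence, by the reproducing property, and point evaluations correspond exactly under $\sigma$.

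Finally, surjectivity of the extended $T$ follows from the same density argument applied to the inverse map $h\mapsto h\circ\sigma^{-1}$. This establishes the isometric isomorphism stated in (iii).
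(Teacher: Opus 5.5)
Your proposal is correct. For item (ii) it is the paper's own argument: the splitting $\|\sigma(x_i)-\sigma(x_j)\|^2=\|x_i-x_j\|^2+(\psi(x_i)-\psi(x_j))^2$, the Lipschitz bound, and $\sqrt{1+L^2}\le 1+L$. You are more careful than the paper when passing from pairwise bounds to the minima that define $q_X$ and $q_{\sigma(X)}$.

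For items (i) and (iii) the paper gives no argument and simply defers to Theorem~2 of Bozzini et al., so your route is genuinely different in being self-contained. You obtain strict positive definiteness directly from the injectivity of $\sigma$. You obtain (iii) from the standard pullback argument: $g\mapsto g\circ\sigma$ is an isometry on spans of translates because the Gram matrices coincide, it extends by completion, and pointwise convergence identifies the extension as composition. What this buys is generality. Neither bijectivity nor continuity of $\psi$ is needed for positive definiteness or for the native-space isometry, because $\sigma$ is always a bijection onto its graph. The bijectivity hypothesis would in fact exclude every continuous $\psi$ on a connected open $\Omega$ when $d\ge 2$. The Lipschitz assumption matters only for the upper bound in (ii).

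Two small remarks. First, surjectivity in (iii) is quicker than via the inverse map: the range of an isometry defined on a Hilbert space is closed and contains the dense span of the $K_\psi(\cdot,x)$. Second, your aside on conditional positive definiteness fails for order $m\ge 2$ if taken literally. The condition $\sum_j c_jq(x_j)=0$ for all $q\in\Pi_{m-1}(\R^d)$ does not force, e.g., $\sum_j c_j\psi(x_j)=0$. So $K_\psi$ is conditionally positive definite only relative to the lifted space $\{p\circ\sigma: p\in\Pi_{m-1}(\R^{d+1})\}$. Since the paper works with strictly positive definite kernels, this does not affect your proof.
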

As a consequence of Proposition \ref{prp:propvsk} (ii), the variably scaled setting might improve the stability of the interpolation process by increasing the separation distance. 
The interpolation via Variably Scaled Kernels (VSKs) depends on the definition of an appropriate scaling function, but no fixed theoretical or numerical recipes for its construction have been provided.

The corresponding VSK interpolant can be defined as
\[
 P_{\psi,f,X}(x):=P_{1,f,\sigma(X)}(\sigma(x)),
\]
so the variable-scale problem in $\R^d$ is exactly a fixed-scale problem on a $d$-dimensional submanifold of $\R^{d+1}$.

For example, for the Gaussian kernel,
\[
 K_\psi(x,y)=\exp\!\left[-\varepsilon^2\bigl(\|x-y\|_2^2+|\psi(x)-\psi(y)|^2\bigr)\right].
\]
If $\psi$ mimics the data, points separated by rapid changes are farther apart in the lifted geometry, while points belonging to a common feature remain close. The asymptotic convergence order is still determined by the smoothness of the kernel and target, but the mapping can substantially change the constants and the conditioning, cf. \cite{DeMarchi2017}.

\paragraph{Discontinuous scaling.}
If the target data possesses a priori known or detected interfaces (literature exhibits many well suited techniques for edge detection, for example, the {\it Canny algorithm} \cite{Canny1986, Weibin2014}), a discontinuous label function may be used. Let $\Omega=\bigcup_{r=1}^M\Omega_r$ and set $\psi(x)=\alpha_r$ for $x\in\Omega_r$, with distinct labels $\alpha_r$. Then $K_\psi$ becomes a variably scaled discontinuous kernel (VSDK) in the sense of~\cite{VSDK2020,ShapeDriven2020}. Samples from different regions acquire an additional vertical separation $|\alpha_r-\alpha_s|$, suppressing cross-interface coupling and reducing Gibbs oscillations. This construction is especially natural in imaging, where masks or edge detectors provide the labels.

\subsection{Rational kernels as nonlinear mappings of the trial space}\label{sec:rational}
Following the eigen-rational RBF framework analyzed in~\cite{Buhmann2020}, rational RBF expansions take the general form of
\begin{equation}\label{eq:rational}
 R_f(x):=\frac{P_g(x)}{P_h(x)}
=\frac{\sum_{i=1}^N\alpha_iK(x,x_i)+\sum_{m=1}^L\gamma_mp_m(x)}
 {\sum_{k=1}^N\beta_k\overline K(x,x_k)}.
\end{equation}
The name eigen-rational comes from the fact that the denominator of this rational function is explicitly built using the eigenvector corresponding to the largest eigenvalue of the kernel matrix. Furthermore, the denominator values $h_i=P_h(x_i)$ are selected first so that the numerator interpolates $g_i=f_i h_i$. Hence $R_f(x_i)=f_i$ whenever $P_h(x_i)\ne 0$.

The construction is naturally interpreted as a nonlinear map
\[
 \mathfrak R:\V_g\times\V_h \longrightarrow \{u/v:u\in\V_g,\ v\in\V_h,\ v\ne0\},
 \qquad \mathfrak R(u,v)=u/v\,,
\]
where the spaces $\V_g, \V_h$ are those of the kernels that appear in the numerator and the denominator in \eqref{eq:rational}, respectively.

This extends the above presented rescaled method: choosing $P_h=P_1$ gives \eqref{eq:rescaled}; the relation between rescaled and rational constructions is discussed in~\cite{Buhmann2020}. The rational setting offers additional freedom for steep gradients, poles outside the domain, and adaptation of local characteristics. At the same time, denominator control and selection of $h_i$ are essential for stability.

\subsection{Mapping the approximation space: fake nodes}\label{sec:mapped}
Let $S:\Omega\to S(\Omega)$ be injective and let $\{B_1,\ldots,B_N\}$ be a basis on $S(\Omega)$. Define the mapped basis as
\[
 B_j^S=B_j\circ S,
 \qquad \V_N^S=\operatorname{span}\{B_1^S,\ldots,B_N^S\}.
\]
The mapped interpolant is then 
\begin{equation}\label{eq:mapped}
 R_f^S(x)=\sum_{j=1}^N\alpha_j^SB_j(S(x))=P_g(S(x)),
\end{equation}
where $g(S(x_i))=f(x_i)$. Notice, that the values are not resampled: the mapped locations $S(X)$ are used only in the reconstruction and are therefore called {\it fake nodes}, and this approach is known as {\it Fake Nodes Approach (FNA)}~\cite{FakeNodes2020,MultivariateFake2021}. The Proposition \ref{prop:inheritance} shows the analysis of the error and the stability of the FNA, which is inherited from the classical approach.

\begin{proposition}[Inheritance under a bijective map]  \label{prop:inheritance}
Suppose $S$ is a homeomorphism between $\Omega$ and $S(\Omega)$. Let $f=g\circ S$. Then
\[
 \|R_f^S-f\|_{L^\infty(\Omega)}
 =\|P_g-g\|_{L^\infty(S(\Omega))}.
\]
Moreover, the mapped Lebesgue constant satisfies
\[
 \Lambda^S(\Omega)=\Lambda(S(\Omega)).
\]
\end{proposition}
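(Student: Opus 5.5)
The plan is to reduce everything to the change of variables $y=S(x)$, which is a bijection of $\Omega$ onto $S(\Omega)$ because $S$ is a homeomorphism. First I would record the basic identity behind the fake nodes. The interpolant $P_g$ is built from the basis $\{B_j\}$ on $S(\Omega)$ and the fake nodes $S(X)$, with data $g(S(x_i))=f(x_i)$. It therefore satisfies $P_g(S(x_i))=f(x_i)$. Since $B_j^S=B_j\circ S$, the coefficient vector $\alpha^S$ solving the mapped interpolation system in $\V_N^S$ at $X$ coincides with the coefficient vector of $P_g$ at $S(X)$. The reason is that the two collocation matrices are equal: $B_j^S(x_i)=B_j(S(x_i))$. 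Unisolvence on one side is thus equivalent to unisolvence on the other, and $R_f^S=P_g\circ S$ as in \eqref{eq:mapped}.

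For the error identity, I would use $f=g\circ S$ to write $R_f^S-f=(P_g-g)\circ S$ pointwise on $\Omega$. Then
\[
\sup_{x\in\Omega}|R_f^S(x)-f(x)|=\sup_{x\in\Omega}|(P_g-g)(S(x))|=\sup_{y\in S(\Omega)}|P_g(y)-g(y)|.
\]
The last step uses surjectivity of $S$ onto $S(\Omega)$. Continuity of $S$ and of $S^{-1}$ guarantees that the functions involved stay continuous, so their $L^\infty$ norms are genuine suprema, up to null sets, which are preserved under a homeomorphism in the sup-norm sense. For honest $L^\infty$ norms I would simply work with continuous representatives and suprema.

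For the Lebesgue constant, I would identify the cardinal functions. Let $\ell_j$ be the cardinal functions of $\operatorname{span}\{B_j\}$ at $S(X)$, so that $\ell_j(S(x_i))=\delta_{ij}$. Then $\ell_j\circ S$ lies in $\V_N^S$ and satisfies $(\ell_j\circ S)(x_i)=\delta_{ij}$. By uniqueness, $\ell_j\circ S$ is exactly the $j$-th cardinal function $u_j^S$ of the mapped space at $X$. Hence the mapped Lebesgue function is $\Lambda^S(x)=\sum_j|\ell_j(S(x))|=\Lambda(S(x))$, and the same sup change of variables gives $\sup_\Omega\Lambda^S=\sup_{S(\Omega)}\Lambda$. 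This is the claimed equality, with $\Lambda^S(\Omega)$ and $\Lambda(S(\Omega))$ read as these Lebesgue constants.

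The main point needing care is not computational. It is the justification that the mapped interpolation problem is well posed exactly when the classical one at the fake nodes is, and that its solution is the composed one. Both follow from the equality of the collocation matrices noted in the first step, so I would state that explicitly. Injectivity of $S$ ensures the fake nodes $S(x_i)$ are pairwise distinct. Surjectivity onto $S(\Omega)$ is what turns the inequality $\le$ between the suprema into an equality.
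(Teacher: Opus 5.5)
Your proposal is correct and follows the paper's own argument. It uses the substitution $z=S(x)$ (your $y=S(x)$) in $R_f^S-f=(P_g-g)\circ S$ and in the cardinal functions $u_j^S=\ell_j\circ S$, together with surjectivity of $S$ onto $S(\Omega)$, and your equal-collocation-matrix remark adds the well-posedness detail the paper leaves implicit; the aside that homeomorphisms preserve null sets is false in general, but harmless, since like the paper you ultimately read the norms as suprema.
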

\begin{proof}
Both statements follow by the substitution $z=S(x)$ in the cardinal representation of $P_g$ and by the surjectivity of $S:\Omega\to S(\Omega)$.
\end{proof}

This result explains why a map that sends an "unfavorable grid" to a "favorable" one transfers the stability and approximation behavior of the latter without changing the function values. In the polynomial framework, examples include maps from equispaced nodes to Chebyshev nodes, the mapped-polynomial strategy of Adcock and Platte ~\cite{AdcockPlatte2016}, the {\it Kosloff--Tal-Ezer map}, and discontinuous maps designed to separate points across jumps \cite{FakeNodes2020}. It is worth mentioning the {\it GRASPA construction}, which combines Runge-avoiding and Gibbs-avoiding maps~\cite{GRASPA2021}. As a final remark, the FNA is very general and can be applied to any basis $B$ of functions, i.e. polynomials, piecewise polynomials, RBF, rational RBF etc.

\section{A unified mapping principle}\label{sec:unified}
We now formalize the philosophy displayed in \cref{fig:philosophy}. An approximation scheme is represented by
\[
 \mathcal A=(X,\V_X,\I_X).
\]
A mapping transformation is a triple
\[
 \mathcal M=(M_X,M_\V,M_\I)
\]
that produces the mapped triple
\[
 \widetilde{\mathcal A}
 =(M_X(X),M_\V(\V_X),M_\I(\I_X)).
\]
In this paper, the observations $f_X$ are preserved. The apparent relocation of nodes in fake-node and VSK methods occurs only internally during reconstruction.

\begin{proposition}[Mapping principle]
Let $X\subset\Omega$ be fixed and suppose a transformed operator $\widetilde\I_X$ satisfies
\[
 (\widetilde\I_X f)(x_i)=f(x_i),\qquad i=1,\ldots,N.
\]
Then the transformed scheme preserves interpolation independently of whether the transformation acts on the operator, geometry, or trial space. In particular:
\begin{enumerate}[label=(\roman*)]
\item normalization by an interpolant equal to one on $X$ gives the rescaled method;
\item pullback by an injective lifting $\sigma$ gives VSK interpolation;
\item composition of a basis with an injective map $S$ gives fake-node interpolation;
\item a quotient $P_g/P_h$ with $P_g(x_i)=f_iP_h(x_i)$ gives rational interpolation.
\end{enumerate}
\end{proposition}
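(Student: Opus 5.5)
The general claim is essentially tautological: the interpolation property is a statement about the values of $\widetilde\I_X f$ on $X$ only, and it does not depend on how $\widetilde\I_X$ is built. So the plan is to record that observation first. Then, for each of (i)--(iv), we identify the transformed operator and verify the hypothesis $(\widetilde\I_X f)(x_i)=f(x_i)$ directly from the construction. Each verification reduces to the uniqueness of the underlying classical interpolation problem, which follows from strict positive definiteness of the kernel matrix, or from unisolvence of the basis on the relevant node set.

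For (i) we take $\widetilde\I_X f=\widehat P_f=P_f/P_1$. Since $P_1(x_i)=1$ and $P_f(x_i)=f(x_i)$, the claim is exactly the first part of the Proposition on interpolation and constant reproduction; we only need $P_1\neq 0$ near $X$, which holds by continuity.

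For (ii) we take $\widetilde\I_X f=P_{\psi,f,X}=P_{1,f,\sigma(X)}\circ\sigma$. Injectivity of $\sigma$ is automatic, since its first component is $x$, so $\sigma(X)$ consists of $N$ distinct points. The kernel matrix $K(\sigma(x_i),\sigma(x_j))$ is therefore invertible by strict positive definiteness of $K$ on $\R^{d+1}$. Evaluating at $\sigma(x_i)$ then returns $f(x_i)$.

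For (iii) we take $R_f^S=P_g\circ S$, where $g(S(x_i))=f(x_i)$. Injectivity of $S$ guarantees that $g$ is well defined on the fake nodes $S(X)$. We additionally need the basis $\{B_j\}$ to be unisolvent on $S(X)$, i.e.\ the matrix $[B_j(S(x_i))]$ must be invertible. Then $R_f^S(x_i)=P_g(S(x_i))=f(x_i)$.

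For (iv) we take $R_f=P_g/P_h$. Given $P_g(x_i)=f_iP_h(x_i)$ and $P_h(x_i)\neq0$, dividing gives $R_f(x_i)=f_i$. The numerator exists because it solves a standard kernel-plus-polynomial interpolation problem for the data $g_i=f_ih_i$.

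The main obstacle is that the statement leaves the nondegeneracy hypotheses implicit: $P_1(x_i)\neq0$ in (i), unisolvence of $\{B_j\}$ on $S(X)$ in (iii), and $P_h(x_i)\neq0$ together with solvability of the augmented numerator system in (iv). I would state each of these explicitly as the condition under which the corresponding transformed operator is well defined, so that the hypothesis of the Proposition becomes verifiable. I would also note that in (ii) no condition on $\psi$ is needed at all, in contrast to the bijectivity assumption of Proposition~\ref{prp:propvsk}(i). The proof would then conclude by invoking the general observation.
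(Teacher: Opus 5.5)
Your proposal is correct and follows the paper's proof, which likewise just evaluates each transformed approximant at $x_i$: $P_1(x_i)=1$ in (i), the fixed-scale interpolant at $\sigma(x_i)$ in (ii), $P_g(S(x_i))=g(S(x_i))$ in (iii), and division of the numerator condition by $P_h(x_i)$ in (iv). Your version is more careful in stating the well-posedness assumptions the paper leaves implicit (unisolvence on $S(X)$, $P_h(x_i)\neq0$, solvability of the augmented numerator system), but $P_1(x_i)\neq0$ in (i) is not an extra hypothesis, since $P_1(x_i)=1$ automatically.
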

\begin{proof}
Each statement follows by evaluating the transformed approximant at $x_i$. In (i), $P_1(x_i)=1$; in (ii), the fixed-scale interpolant at $\sigma(x_i)$ equals $f_i$; in (iii), $P_g(S(x_i))=g(S(x_i))=f_i$; in (iv), the interpolation condition for the numerator gives $P_g(x_i)/P_h(x_i)=f_i$.
\end{proof}

The theorem is deliberately elementary: its value is conceptual. It identifies a common invariant, namely preservation of sampled values, while approximation and stability are modified through the representation.

\section{Numerical experiments}\label{sec:num}
In the following two experiments, both plots and tables, have been generated in Python 3.12.11: interested readers can download the script {\tt main.py} at \url{https://github.com/demarchi17/MappingBases_Unified} to reproduce the results. Dense linear systems were solved with double precision. The goal is illustrative rather than definitive; parameter searches are reported explicitly.

\subsection{Rescaled Localized RBF interpolation}
We interpolate $f(x)=x^2$ on $[0,1]$ at seven equispaced points using the $C^2$ Wendland function 
\[
 \phi(r)=(1-\varepsilon r)_+^4(4\varepsilon r+1),
\]
choosing the shape parameter $\varepsilon=5$. The classical and rescaled approximants are shown in \cref{fig:rescaled}. The rescaled method reproduces constants exactly and, in this example, modestly changes the boundary behavior while preserving interpolation.
\begin{figure}[hbt!]
\centering
\begin{minipage}{.49\linewidth}\centering
\includegraphics[width=\linewidth]{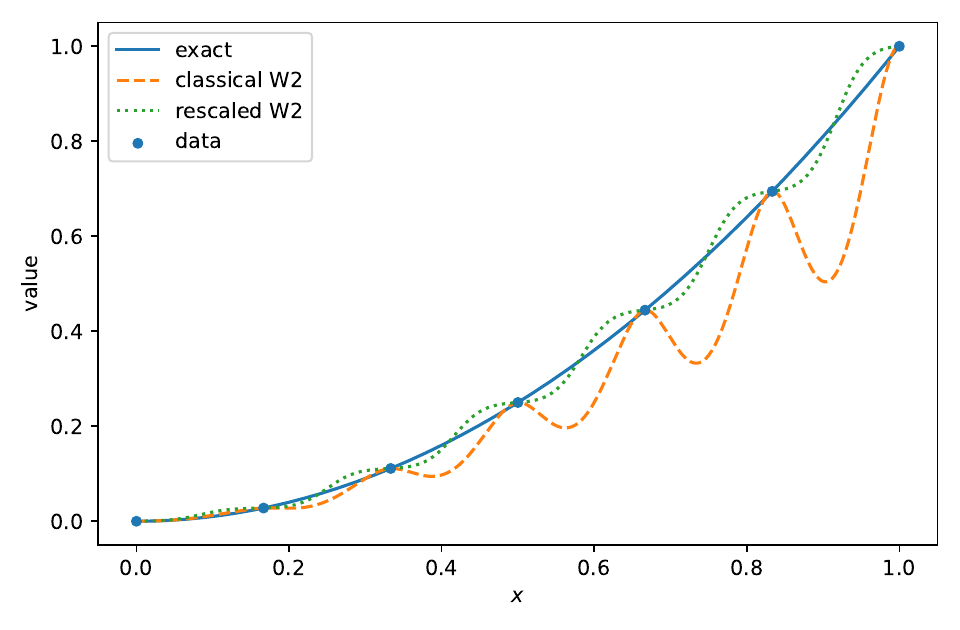}
\end{minipage}\hfill
\begin{minipage}{.49\linewidth}\centering
\includegraphics[width=\linewidth]{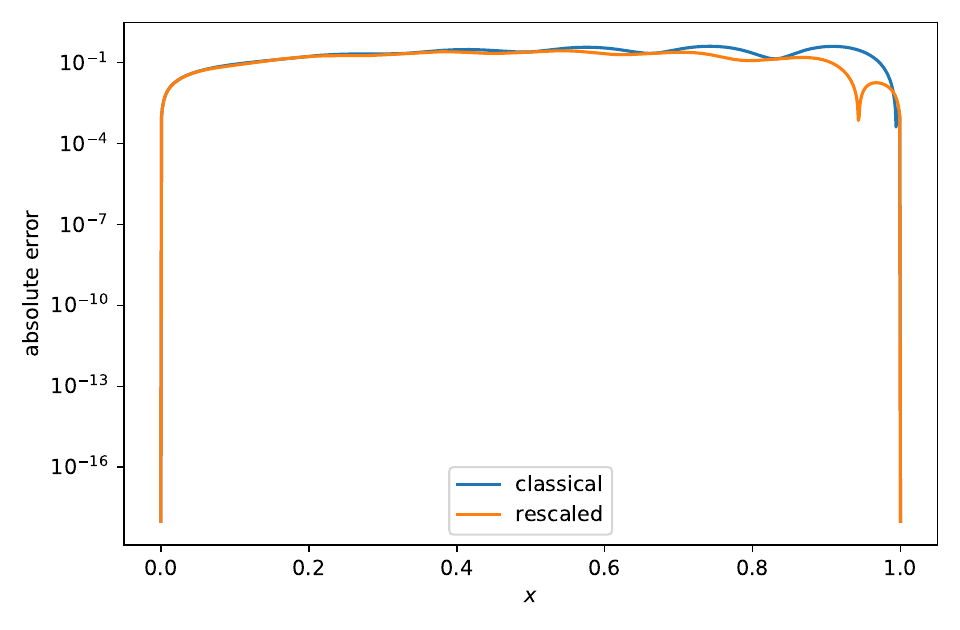}
\end{minipage}
\caption{Classical vs rescaled localized  interpolation of $f(x)=x^2$ using the $C^2$ Wendland  function (left) and pointwise errors (right).}
\label{fig:rescaled}
\end{figure}

\subsection{A data-mimicking VSK}
For the Runge function
\[
 f(x)=\frac{1}{1+25x^2},\qquad x\in[-1,1],
\]
we compare classical Gaussian interpolation with the VSK
\[
 K_\psi(x,y)=\exp\{-\varepsilon^2[(x-y)^2+(\psi(x)-\psi(y))^2]\},
 \qquad \psi(x)=\mu f(x).
\]
For each $N$, $\varepsilon$ is searched on a common logarithmic grid and $\mu\in\{0.25,0.5,1,2\}$. Since the exact target is used as the scale function, this experiment isolates the potential of a perfect data-mimicking map. In practice, $\psi$ must be estimated from the samples and regularized.

\begin{table}[hbt!]
\centering
\caption{Optimized maximum errors for Gaussian and VSK interpolation of the Runge function. The search uses the same logarithmic grid of shape parameters for both methods.}
\label{tab:vsk-runge}
\begin{tabular}{r S[scientific-notation=true] S[table-format=2.3] S[scientific-notation=true] S[table-format=2.3] S[table-format=1.2]}
\toprule
$N$ & {Gaussian error} & {$\varepsilon$} & {VSK error} & {$\varepsilon$} & {$\mu$}\\
\midrule
11 & 1.584e-03 & 6.31 & 6.704e-08 & 0.1 & 0.5 \\
17 & 1.988e-03 & 7.69 & 1.375e-08 & 0.164 & 0.25 \\
25 & 4.411e-04 & 6.31 & 1.392e-08 & 0.11 & 2 \\
33 & 8.269e-05 & 6.31 & 7.827e-09 & 0.535 & 0.5 \\
\bottomrule
\end{tabular}
\end{table}
\begin{table}[hbt!]
\centering
\caption{Maximum errors in the one-dimensional experiments.}
\label{tab:summary-errors}
\begin{tabular}{l S[scientific-notation=true] S[scientific-notation=true]}
\toprule
Experiment & {Classical} & {Mapped/rescaled}\\
\midrule
Wendland interpolation of $f(x)=x^2$ & 4.051e-01 & 2.748e-01 \\
Runge polynomial interpolation & 5.982e+01 & 3.303e-03 \\
\bottomrule
\end{tabular}
\end{table}

\begin{figure}[hbt!]
\centering
\begin{minipage}{.49\linewidth}\centering
\includegraphics[width=\linewidth]{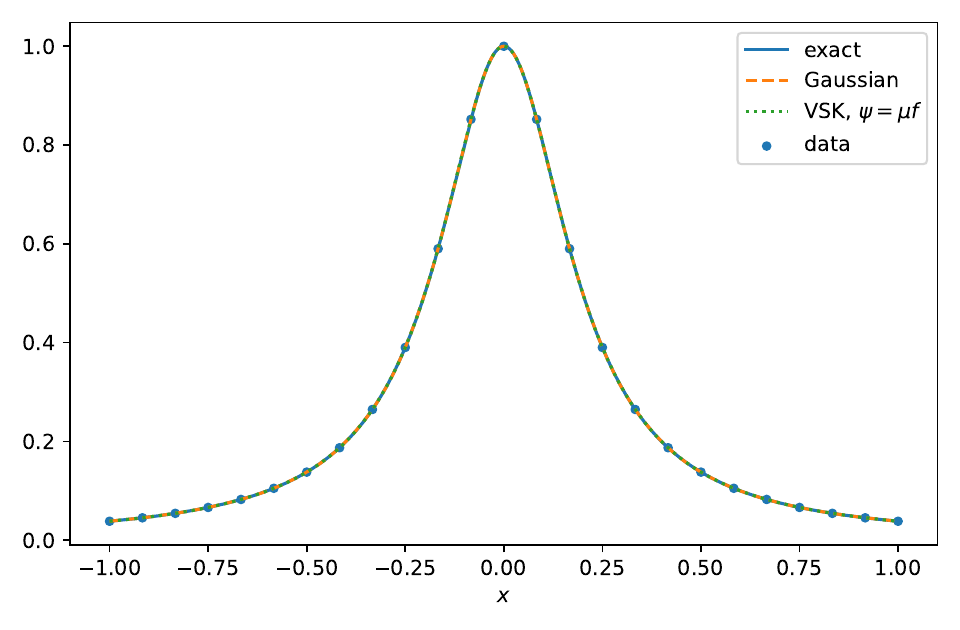}
\end{minipage}\hfill
\begin{minipage}{.49\linewidth}\centering
\includegraphics[width=\linewidth]{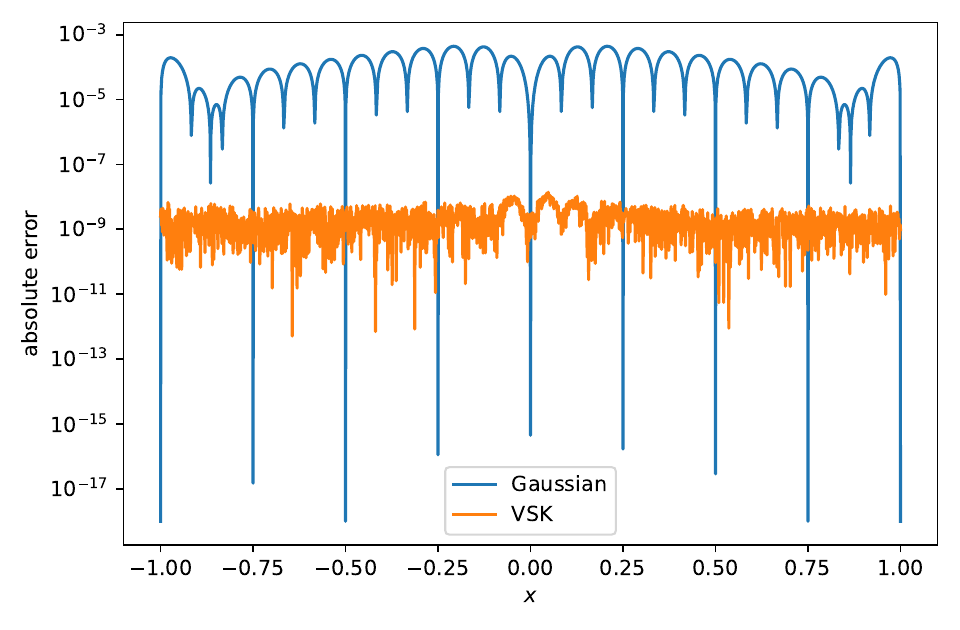}
\end{minipage}
\caption{Gaussian and VSK interpolation of the Runge function for $N=25$ (left) and corresponding errors (right). Parameters are selected by minimizing the maximum error on the evaluation grid.}
\label{fig:vsk}
\end{figure}

The VSK errors in \cref{tab:vsk-runge} are much smaller, but the selected systems can be extremely ill-conditioned. This is an important caveat: geometric adaptation does not remove the classical flat-kernel trade-off. Stable algorithms, regularization, or constrained parameter selection are required before interpreting the smallest formal errors as reliable computational gains.

\subsection{Mapped polynomial bases and fake nodes}
We interpolate the Runge function at the equispaced nodes $N=21$. The classical polynomial interpolant is compared with a mapped basis obtained from the piecewise-linear map $S$ satisfying
\[
 S(x_j)=z_j,
\]
where $z_j=-\cos( {j\pi \over N})$ are the {\it Chebyshev--Lobatto} nodes. The values $f(x_j)$ are retained and interpreted as data of a function $g$ at the fake nodes $z_j$. The mapped interpolant is $P_g(S(x))$.

\begin{figure}[t]
\centering
\begin{minipage}{.49\linewidth}\centering
\includegraphics[width=\linewidth]{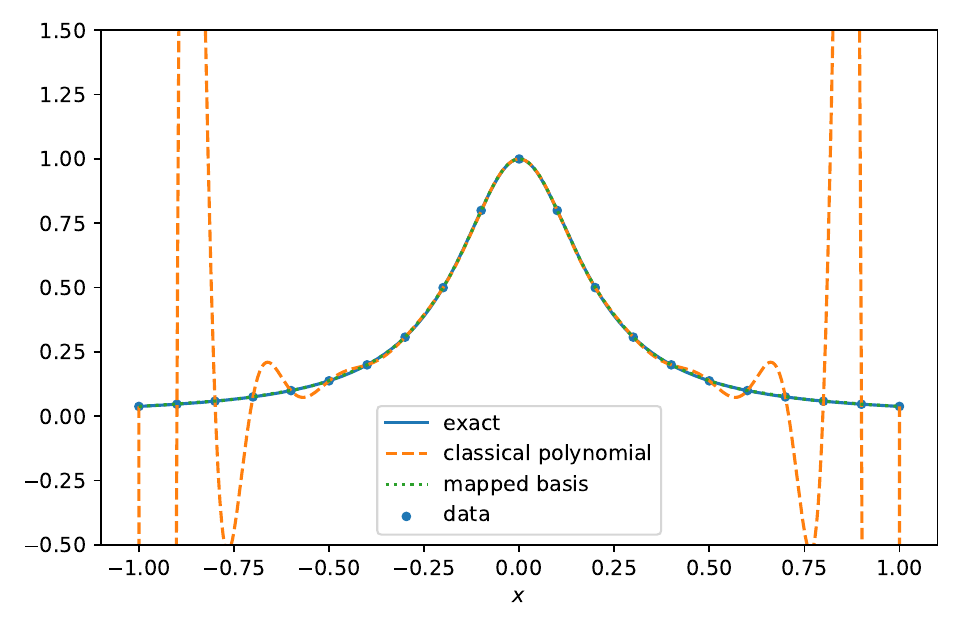}
\end{minipage}\hfill
\begin{minipage}{.49\linewidth}\centering
\includegraphics[width=\linewidth]{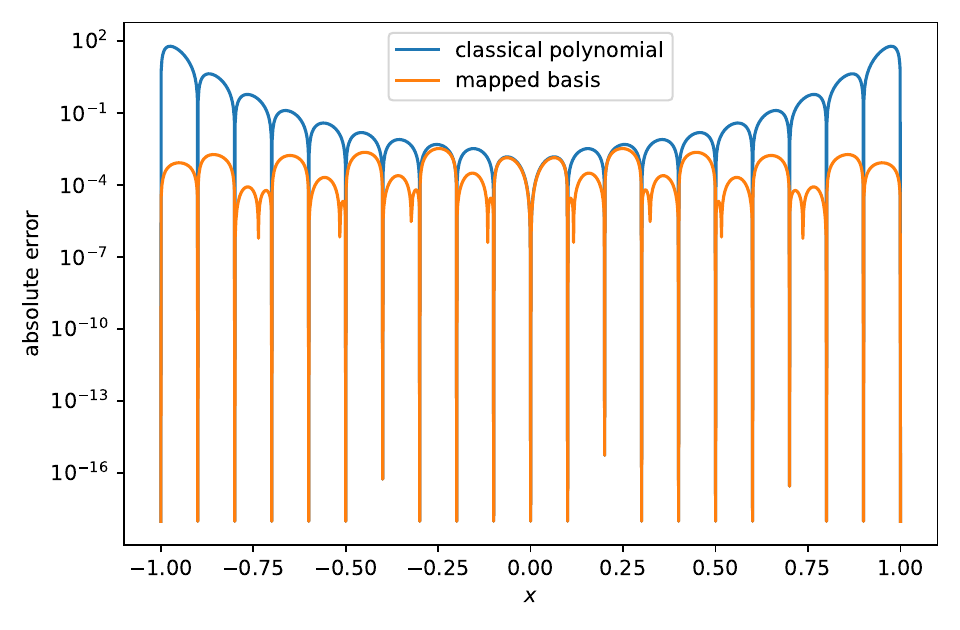}
\end{minipage}
\caption{Classical equispaced polynomial interpolation and mapped-basis interpolation of the Runge function (left), with pointwise errors (right).}
\label{fig:fake}
\end{figure}

The experiment makes the no-resampling principle explicit. The original values are never recomputed; only the basis geometry is changed. The mapped scheme inherits the favorable Lebesgue behavior of the Chebyshev grid and removes the large endpoint oscillations of the equispaced interpolant, consistently with the mapped-basis theory in~\cite{AdcockPlatte2016,FakeNodes2020}.

\paragraph{A mapped construction from equispaced nodes to Padua points.}
We now consider a two-dimensional analog of the one-dimensional
fake-Chebyshev construction on
$\Omega=[-1,1]^2$.
The space of bivariate polynomials of total degree at most $n\ge 1$, 
$\Pi_n^2$, has dimension
$
\dim \Pi_n^2
=
\frac{(n+1)(n+2)}{2}.
$
In this case a good set of nodes in $\Omega$ is the set of the {\it Padua points} \cite{Bos2006}.
A convenient representation of the {\it Padua points} of degree $n$ is as the set
\[
\mathcal P_n
=
\left\{
\left(
-\cos\frac{i\pi}{n},
-\cos\frac{j\pi}{n+1}
\right):
\begin{array}{l}
0\leq i\leq n,\\
0\leq j\leq n+1,\\
i+j\ \hbox{even}
\end{array}
\right\}.
\]
whose cardinality is exactly the dimension of $\Pi_n^2$. 

To construct a corresponding set of physical equispaced points, consider the set
$$\left\{
x_i=-1+\frac{2i}{n},\; i=0,\ldots,n,\;\qquad
y_j=-1+\frac{2j}{n+1},
\; j=0,\ldots,n+1
\right\}\,.
$$
We retain only those points satisfying the same parity condition,
\[
X_n
=
\left\{
(x_i,y_j):
0\leq i\leq n,\;
0\leq j\leq n+1,\;
i+j\ \hbox{even}
\right\}.
\]
Therefore
\[
\#X_n
=
\#\mathcal P_n
=
\dim \Pi_n^2.
\]

We define two one-dimensional piecewise-linear maps
\[
s_n:[-1,1]\to[-1,1],
\qquad
s_{n+1}:[-1,1]\to[-1,1],
\]
by imposing
\[
s_n(x_i)
=
-\cos\frac{i\pi}{n},\qquad
s_{n+1}(y_j)
=
-\cos\frac{j\pi}{n+1}.
\]

The two-dimensional map is then defined componentwise as
\[
S:\Omega\longrightarrow \Omega,
\qquad
S(x,y)
=
\left(
s_n(x),s_{n+1}(y)
\right).
\]
In particular,
\[
S(x_i,y_j)
=
\left(
-\cos\frac{i\pi}{n},
-\cos\frac{j\pi}{n+1}
\right),
\]
and therefore
\[
S(X_n)=\mathcal P_n.
\]

Let now $f:\Omega\to\mathbb R$ be the function to be approximated.
The values at the physical equispaced points are
$f_{ij}= f(x_i,y_j),\qquad i+j\ \hbox{even}.$
These values are retained but are reinterpreted as samples of a "fake" function $g$ at the Padua points, that is 
\[
g\left(
-\cos\frac{i\pi}{n},
-\cos\frac{j\pi}{n+1}
\right)
=
f(x_i,y_j).
\]

Let $p_n[g]\in\Pi_n^2$ denote the polynomial interpolant at the Padua points, 
\[
p_n[g](z_{ij})
=
g(z_{ij})
=
f(x_i,y_j).
\]

The mapped interpolant is finally defined by
\[
\boxed{
\mathcal I_n^S f(x,y)
=
p_n[g]\bigl(S(x,y)\bigr).
}
\]

At every physical interpolation point $(x_i,y_j)\in X_n$,
\[
\mathcal I_n^S f(x_i,y_j)
=
p_n[g]\bigl(S(x_i,y_j)\bigr)
=
p_n[g](z_{ij})
=
f(x_i,y_j),
\]
so that the mapped approximation preserves interpolation at the original
physical nodes.

This construction can be regarded as the two-dimensional counterpart of
the mapped fake-Chebyshev interpolation procedure:
\[
\boxed{
\text{equispaced physical nodes}
\quad\xlongrightarrow{\;S\;}\quad
\text{Padua points},
}
\]
while the sampled function values remain unchanged.

As a test problem, one may consider the {\it bivariate Runge function}
\[ f(x,y)=\frac{1}{1+25(x^2+y^2)},
\qquad (x,y)\in[-1,1]^2.
\]

The classical total-degree polynomial interpolant constructed on $X_n$
can then be compared with the mapped interpolant
\[
\mathcal I_n^S f(x,y)
=
p_n[g](S(x,y)).
\]
The purpose of the map is to preserve the original physical samples while
transferring the interpolation geometry to the more stable Padua
configuration. See figure\ref{fig:placeholder} and the following table for the results
\begin{table}[htbp]
\centering

\label{tab:fna-padua-runge}
\begin{tabular}{llll}
\hline
\textbf{Method}
& \textbf{Maximum error}
& \textbf{RMS error}
& $\boldsymbol{\kappa_2(V)}$ \\
\hline
Fake Nodes--Padua
& $2.03\times 10^{-2}$
& $2.03\times 10^{-3}$
& $2.42$ \\
Classical equispaced interpolation
& $1.33\times 10^{3}$
& $2.10\times 10^{1}$
& $7.32\times 10^{4}$ \\
\hline
\end{tabular}
\caption{Comparison between the Fake Nodes Approach based on Padua points 
and classical polynomial interpolation for the two-dimensional Runge function,
with degree $n=20$ and $N=231$ interpolation nodes.}
\end{table}

\begin{figure}[h!]
    \centering
    \includegraphics[width=0.85\linewidth]{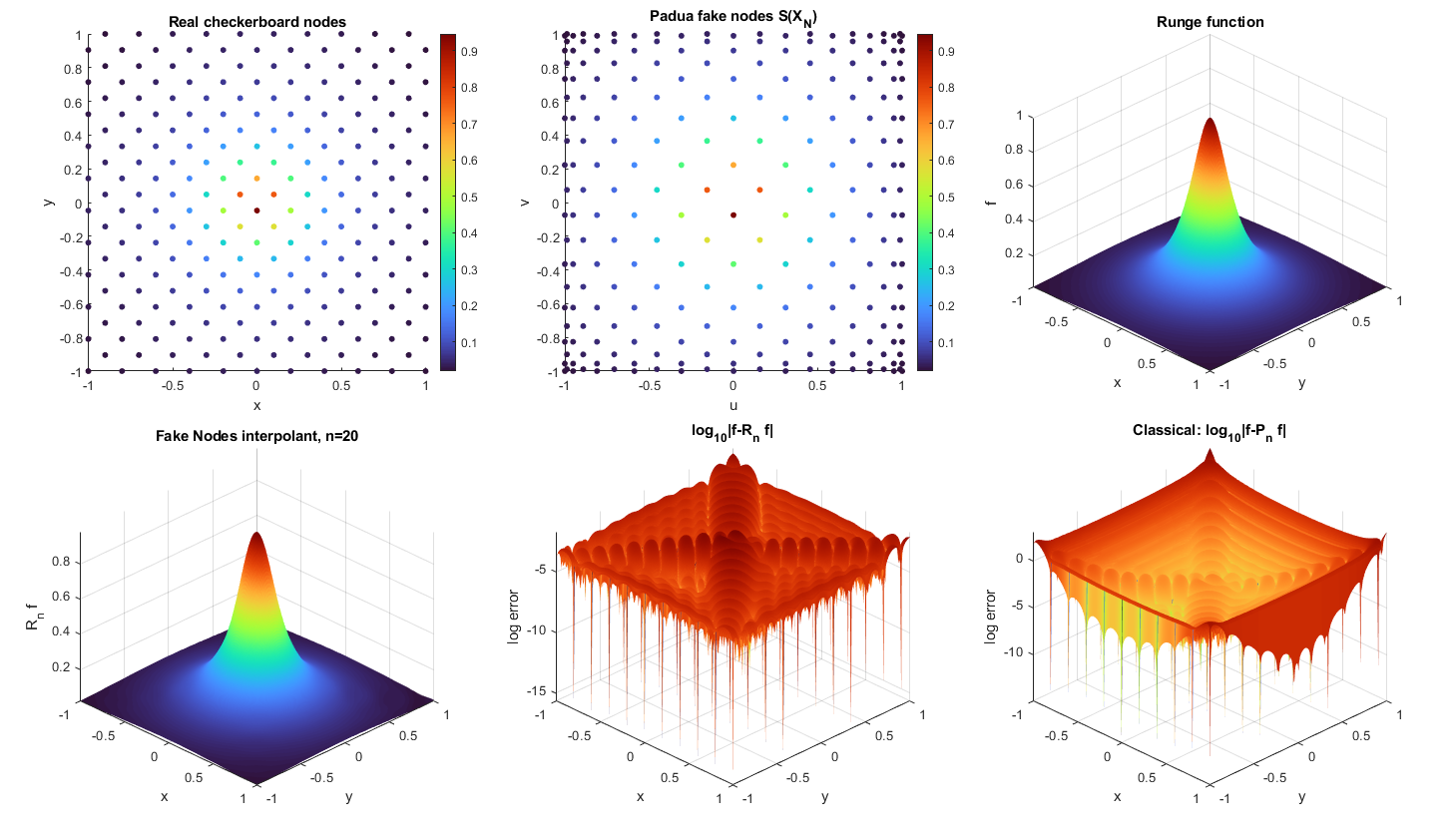}
    \caption{The Runge function reconstructed using the FNA}
    \label{fig:placeholder}
\end{figure}
\vskip 0.1in
The experiments reveal complementary roles for the mappings. Here we add some comments.
\begin{itemize}
\item Rescaling is inexpensive because it reuses the same matrix factorization and adds only the interpolant of the constant function. It is particularly natural for localized bases and partition-of-unity methods.
\item VSK changes pairwise distances and can encode known shape information. Its success depends on constructing a reliable scale function; noisy pilot approximants may degrade both accuracy and conditioning.
\item VSDK uses discontinuous labels to decouple regions separated by interfaces. It is therefore suited to image reconstruction and piecewise-smooth data.
\item Fake Nodes Approach modify a basis without resampling, allowing stable node distributions to be exploited even when measurements are acquired elsewhere.
\item Rational kernels introduce a nonlinear trial manifold. They include rescaling as a special case and offer additional shape control, but denominator safety is indispensable.
\end{itemize}

Our unifying perspective also suggests hybrid constructions. A mapped VSK combines a domain map $S$ and a scale function $\psi$,
\[
 K_{\psi,S}(x,y)=K((S(x),\psi(x)),(S(y),\psi(y))),
\]
while rational VSK expansions combine geometric lifting with a quotient trial space. These combinations are promising in inverse problems and scientific imaging, where geometry, discontinuities, and dynamic range must be treated simultaneously, see e.g. \cite{Marchetti:solar}.

\section{Conclusions}
We have organized rescaled interpolation, variably scaled kernels, fake nodes, and rational kernel expansions around a single principle: improve approximation by mapping the representation while preserving the original sampled values. The methods act at different levels, yet share the same interpolation invariant. This viewpoint clarifies their relationships and provides a systematic route to hybrid methods.

Future work should focus on data-driven maps with stability constraints, automatic detection of discontinuities, rational denominator selection, local and multilevel implementations, and learned maps that retain the mathematical guarantees of positive definiteness and interpolation. The central challenge is no longer merely to choose a kernel or a node set, but to design a map that exposes the structure of the data without amplifying noise or ill-conditioning.

\vskip 0.2in
{\bf Acknowldgments}. The author gratefully acknowledges the support and encouragement of the
\emph{Unione Matematica Italiana} (UMI), in particular through the
Thematic Group on Approximation Theory and Applications.
The author also acknowledges the support of the
\emph{Gruppo Nazionale per il Calcolo Scientifico} (GNCS),
which is part of the \emph{Istituto Nazionale di Alta Matematica} (INdAM).

\end{document}